\let\oldnocite\nocite
\renewcommand*{\nocite}[1]{\oldnocite{#1}\Hy@backout{#1}}
\theoremstyle{plain}
\newtheorem{theorem}{Theorem}[section]
\newtheorem{lemma}[theorem]{Lemma}
\newtheorem{corollary}[theorem]{Corollary}
\theoremstyle{definition}
\newtheorem{definition}[theorem]{Definition}
\newtheorem{example}[theorem]{Example}
\newtheorem{remark}[theorem]{Remark}
\newcommand{\two}{\mathbbm 2}
\newcommand{\id}{\mathsf{id}}
\newcommand{\op}{^{\mathsf{op}}}
\def\colim#1#2{{\mathsf{colim}_{#1}{#2}}}
\def\sup#1#2{{\mathsf{sup}_{#1}{#2}}}
\newcommand{\V}{\Omega}
\def\vcat{\V\hspace{0pt}\mbox{-}\hspace{.5pt}{\mathsf{cat}}}
\def\kat#1{{\mathscr{#1}}}
\def\A{\kat{A}}
\def\B{\kat{B}}
\def\K{\kat{K}}
\newcommand{\set}{\mathsf{Set}}
\newcommand{\pre}{\mathsf{Ord}}
\newcommand{\vsup}{\V\hspace{0pt}\mbox{-}\hspace{.5pt}\mathsf{Sup}}
\newcommand{\vinf}{\V\hspace{0pt}\mbox{-}\hspace{.5pt}\mathsf{Inf}}
\newcommand{\CCD}{\V\hspace{0pt}\mbox{-}\hspace{.5pt}\mathsf{CD}}
\newcommand{\D}{\mathbbm D}
\newcommand{\U}{\mathbbm U}
\newcommand{\y}{\mathsf{y}}
\def\tensor{\otimes}
\newcommand{\supa}{\bigvee}
\newcommand{\infa}{\bigwedge}
\renewcommand{\phi}{\varphi}
\begin{document}

\title[Equational enriched distributivity]{An equational approach to enriched distributivity}

\author[A. Balan]{Adriana Balan}
\address{University Politehnica of Bucharest, Romania}
\email{adriana.balan@upb.ro}

\author[A. Kurz]{Alexander Kurz} 

\address{Chapman University, Orange California, USA}
\email{akurz@chapman.edu}

\maketitle


\begin{abstract} 
The familiar adjunction between ordered sets and completely distributive lattices can be extended to generalised metric spaces, that is, categories enriched over a quantale (a lattice of ``truth values''), via an appropriate distributive law between the ``down-set'' monad and the ``up-set'' monad on the category of quantale-enriched categories. %
If the underlying lattice of the quantale is completely distributive, and if powers distribute over non-empty joins in the quantale, then this distributive law can be concretely formulated in terms of operations, equations and choice functions, similar to the familiar distributive law of lattices. %

\medskip

\noindent {\em AMS 2010 Subject Classification:} {
   18B35
  ,18C05
  ,06D10
  ,18D20
  ,06F07
  ,08A65
  .} 
  
\medskip
  
\noindent {\em Keywords:} quantale, enriched category, weighted (co)limit, (co)complete \linebreak 
enriched category, completely distributive quantale-enriched category. 
\end{abstract}


\section{Introduction}

\subsection{\bf The double dualization monad.} 
The contravariant adjunction 
\[
[-, \two]\dashv [-,\two]:\mathsf{Ord}\op \to \mathsf{Ord}
\]
produces a monad on $\mathsf{Ord}$, the category of antisymmetric ordered sets and monotone maps, whose algebras are the {\em completely distributive lattices}~\cite{MarmolejoRosebrughWood02}. 
The latter are monadic also over $\set$~\cite{PedicchioWood99}, hence form an {\em(infinitary) variety}. Recall that a completely distributive lattice $A$ is a complete lattice satisfying 
\[
\bigwedge_{k\in K} \bigvee A_k = \bigvee_{f\in \mathcal F} \bigwedge f(K)
\]
for every family of subsets $(A_k)_{k\in K}$ of $A$, where $\mathcal F =\{f:K \to A \mid f(k)\in A_{k} \}$ denotes the set of choice functions. There is also a constructive definition of completely distributive lattices available in arbitrary toposes~\cite{MarmolejoRosebrughWood02}: 
if $\D A$ denotes the usual lattice of downsets of an ordered set $A$, and $\y_A: A\to \D A$ is the (Yoneda) embedding of principal downsets, then $A$ being complete means that $\y_A$ has a left adjoint $\sup{A}{}$ which computes suprema of downsets, and being completely distributive means that $\sup{A}{}$ has itself a left adjoint mapping each element to the downset of elements totally below it.\footnote{
An element $a\in A$ is {\em totally below} $b\in A$ if for every subset $S \subseteq A$ such that $b\leq \bigvee S$ there is some $x\in S$ with $a\le x$~\cite{GHKLMS03}.}

As observed in~\cite{RosebrughWood04}, the dual adjunction between $\pre$ and $\pre\op$ exhibited above is in fact a special case of $[-,\V]\dashv [-,\V]:\vcat_{s}\op \to \vcat_{s}$, for $\V=\two$. Here $\vcat_s$ is the category of skeletal categories enriched over a commutative quantale $\V$. 
As in the ordered case, 
the category of algebras for the induced monad can be equivalently described using a distributive law between the enriched downset monad $\D$ and the enriched upset monad 
$\U$~\cite{MarmolejoRosebrughWood02,BabusKurz16,Stubbe17} and  is monadic both over $\vcat_s$ and over $\set$~\cite{PuZhang15}.

\subsection{\bf Quantales.} Historically, quantales were introduced in the 1980s~\cite{Mulvey86} as a logical-theoretic framework for studying certain
spaces arising from quantum mechanics. 
They can be perceived as lattices of ``truth-values'' or ``distances'', equipped with an extra operation expressing conjunction (logical interpretation) or addition of distances (metric interpretation). 
Alternatively, quantales are also complete idempotent semirings, hence appearing in tropical and idempotent analysis.

\subsection{\bf Quantale-enriched categories.} These structures generalise both metric \linebreak spaces and ordered sets~\cite{Lawvere73} within the realm of enriched category theory, making possible a theory of quantitative domains~\cite{Wagner94,Stubbe07a}. 
The category $\vsup$ of {\em (co)complete} skeletal quantale-enriched categories and cocontinuous functors, however, lives in the algebraic world: the objects are just complete sup-lattices endowed with an action of the quantale, or, in other words, complete semimodules over complete idempotent semirings~\cite{JoyalTierney84,PedicchioTholen89,Stubbe07}.
The simplest example, namely cocomplete enriched categories over the two-element quantale $\two$, are nothing but complete sup-lattices. 
It is the action of the quantale which enhances the path to the many-valued realm.

\subsection{\bf Completely distributive quantale-enriched categories.} Adding one \linebreak more layer of structure finally brings us to the object of study of the present paper: $\CCD$, the category of completely distributive skeletal quantale-enriched categories and continuous and cocontinuous enriched functors. There are (at least) three possible approaches/motivations for studying $\CCD$:
\begin{itemize}
\item Completely distributive quantale-enriched categories arise naturally in quantitative domain theory and many-valued logics (see e.g.~\cite{Wagner94,Waszkiewicz09,HofmannWaszkiewicz11,GalatosGilFerez17}).
\item Categorically, they are (co)complete skeletal quantale-enriched categories for which taking suprema distributes over limits, e.g.~\cite{LaiZhang06,Stubbe07a}, and they are also the projective objects of the (infinitary) variety $\vsup$. 
\item Algebraically, completely distributive quantale-enriched categories are precisely the algebras for the quantale-enriched version of the double dualisation monad described above over $\vcat_s$~\cite{BabusKurz16,Stubbe17}. 
Moreover, $\CCD$ is also {\em monadic} over $\set$~\cite{PuZhang15}. 
Consequently, $\CCD$ is equationally presentable~\cite{Linton66}. But Linton's theorem does not help us to find a convenient equational presentation.  

\end{itemize}
The present paper contributes with an equational presentation for completely distributive quantale-enriched categories under some natural requirements on the quantale which, in particular, are satisfied in the case $\V=\two$.


\section{Preliminaires}
\label{sec:preliminaries}

In this section we gather the necessary technicalities from quantale enriched category theory
that will make this paper reasonably self-contained. 
Most of the material is standard; for the general theory of enriched categories we refer to Kelly's monography~\cite{kelly:book}, 
while for quantale/quantaloid-enriched categories the reader might consult e.g.~\cite{Wagner94,Stubbe05,Stubbe06,LaiZhang06,Stubbe07,Stubbe07a,PuZhang15,Hohle15}.


\subsection{Quantales and quantale-enriched categories.}

A {\em commutative quantale} is a complete sup-lattice $(\V,\lor,\land,\bot,\top)$ and a commutative monoid $(\V, \tensor,e)$,%
\footnote{
The unit $e$ does not need to be the top element in $\V$, but if it is the case, the quantale is called {\em integral}. } %
such that $v\tensor-$ preserves arbitrary joins, for all $v\in \V$. In particular it has a right adjoint $[v,-]$:
\[
v \tensor w \leq u \quad \iff \quad w \leq [v,u] \ \ .
\]
If we interpret $v\tensor w$ as a ``conjunction'' and $[v,w]$ as an ``implication'', then $v\tensor[v,u]\leq u $ becomes the usual ``modus ponens'' of logic. Hence the elements of the quantale can be perceived as ``truth values''.

\begin{example}\label{ex:quantales}

\begin{enumerate}

\item\label{ex:2-quantale} 
The simplest example of a (commutative) quantale is the two-element chain $\V = (\two, \land,1)$, with meet as multiplication.

\item\label{ex:[0,infty]-quantale}
The (extended) positive real numbers $\V=([0, \infty]\op, +,0)$, with addition and zero, form a commutative quantale, with $[v,u] = u{-}v \mbox{ \sf{if} } v\leq u \mbox{ \sf{else} }0$.

\item\label{ex:3-quantale} The three-element chain $\mathbb 3=\{0<\frac{1}{2}<1\}$ supports two quantale structures for which the tensor is idempotent~\cite{CasleyCrewMeseguerPratt91}: 
Taking $\tensor$ to be the meet, one obtains the Heyting algebra $\V=(\mathbb{3}, \land, 1)$.

\noindent
\begin{minipage}{.65\textwidth}
The second idempotent multiplication $\otimes$ on $\mathbb 3$ has unit $\frac{1}{2}$ and $[-,-]$ as indicated in the table on the right. In particular, the resulting quantale $(\mathbb 3, \tensor,\frac{1}{2})$ is a \linebreak {\em Sugihara monoid}~\cite{OlsonRaftery07}. 
\end{minipage}
\begin{minipage}{.25\textwidth} 
\vskip-1.2em
{\small{\[
\begin{array}{|c||c|c|c|c|}
\hline 
[-,-] & 0 & \frac{1}{2} & 1 \\ \hline \hline
0     & 1 & 1 & 1 \\
\frac{1}{2}     & 0 & \frac{1}{2} & 1 \\
1     & 0 & 0 & 1 \\
\hline
\end{array}
\]}}
\end{minipage}
%

\noindent
Besides the two idempotent structures exhibited above, there exists only one more quantale structure on $\mathbb 3$ (non-idempotent and integral), given by the {\em {\L}ukasiewicz} truncated addition $v \odot w = \max(0, v+w-1)$. 

\item \label{exle:Jipsen} There are also {non-distributive} commutative quantales, among which we mention the simplest, namely the lattices $M_3$ and the $N_{5}$, both with idempotent tensor as indicated below 
\[
\xymatrix@R=20pt@C=7pt{& \top \ar@{-}[dl] 
\ar@{-}[d] \ar@{-}[dr]& 
\\
{}\save[]+<-0.8cm,0cm>*\txt<8pc>{%
     $\top {\tensor} a = a $}
\restore
& 
e \ar@{-}[d] 
& 
{}\save[]+<0.8cm,0cm>*\txt<8pc>{%
     $b= \top {\tensor} b $}
\restore
\\
& \bot=a{\tensor} b \ar@{-}[ul] \ar@{-}[ur] & }
\xymatrix@R=6pt@C=10pt{
& \top \ar@{-}[ddl] \ar@{-}[dr]& 
\\
&& {}\save[]+<0.8cm,0cm>*\txt<8pc>{%
     $b= \top {\tensor} b $}
\restore
\ar@{}[dd]^(.15){}="a"^(.9){}="b" \ar @{-}"a";"b"
\\
e \ar@{-}[ddr] &&
\\
& & {}\save[]+<1.4cm,0cm>*\txt<8pc>{%
     $a = a {\tensor} b = a {\tensor} \top $}
\restore \ar@{-}[dl] 
\\
& \bot & } 
\]
For more examples we refer to~\cite{GalatosJipsen}. 

\end{enumerate}

\end{example}


An {\em $\V$-category} $\A$ consists of a set $A$, together with a map%
\footnote{
In the sequel we shall refer to this map as the $\V$-hom, $\V$-metric or $\V$-distance.} %
$\A:A\times A \to \V$ 
satisfying 
\begin{equation}\label{eq:vcat}
e\leq \A(a,a)
\qquad \mbox{ and } \qquad 
\A(a,b) {\tensor} \A(b,c) \leq \A(a,c)
\end{equation} for all $a,b,c\in A$. 
An {\em $\V$-functor} $f:\A\to \B$ is a map between the underlying sets such that $\A(a,b)\leq \B(f(a),f(b))$ holds. 
Finally, an $\V$-natural transformation $f\to g$ is given whenever $
e\leq\B(f(a),g(a))
$
holds for all $a\in A$. 
Thus, there is at most one $\V$-natural transformation between $f$ and $g$.
We shall denote by $\vcat$ the category of $\V$-categories and of $\V$-functors (actually, it is a locally ordered category). As a last piece of notation, let $[\A,\B]$ be the $\V$-category having as objects $\V$-functors $f:\A \to \B$, with $\V$-homs $[\A,\B](f,g) = \bigwedge_{a} \B(f(a),g(a))$.

\begin{example}\label{ex:vcats}

\begin{enumerate}

\item\label{ex:V as a V-category} 
The quantale $\V$ becomes an $\V$-category with $\V(v,w) = [v,w]$.

\item\label{ex:discr-vcat} 
Each set $A$ can be perceived as a an $\V$-category $\mathsf{d}A$ when it is equipped with $\mathsf{d}A(a,a) = e$ and $\mathsf{d}A(a,b)=\bot$ for $a\neq b$.  
Such an $\V$-category is called \emph{discrete}. With the obvious action on arrows, this extends to a functor $\mathsf{d}:\set \to \vcat$.

\item 
{\em Ordered sets} $(A,\leq)$ are enriched categories over the two-element quantale $\mathbb 2$: the reflexivity and transitivity axioms 
correspond exactly to~\eqref{eq:vcat}~\cite{Lawvere73}\label{ex:lawvere}. 
%

\item {\em Generalised metric spaces}
If $\V$ is the real half line $([0,\infty]\op, +, 0)$ as in Example~\ref{ex:quantales}.\ref{ex:[0,infty]-quantale}, a small $\V$-category $\A$ is a {\em generalised metric space\/}:
\[
0\geq \A(a,a), \quad \A(a,b) + \A(b,c) \geq \A(a,c) \ \ .
\]
The generalisation of the usual notion is three-fold. 
First, $\A(-,-):A \times A \to [0,\infty]$ is a pseudo-metric in the sense that two distinct points may have distance $0$.
Second, $\A(-,-)$ is a quasi-metric in the sense that distance is not necessarily symmetric. 
Third, distances are allowed to be infinite, which has the important consequence that the category of generalised metric spaces has colimits (whereas metric spaces do not even have coproducts).
An $\V$-functor is then exactly a {\em non-expanding map\/}. 
\end{enumerate}	
\end{example}

In view of the above examples, it is helpful to think of $\V$-categories as $\V$-valued orders~\cite{Hohle15}. 
Actually, each $\V$-category $\A$ does carry an induced order\footnote{
That is, a reflexive and transitive relation.} 
\begin{equation}\label{eq:order}
a\leq b \qquad  \iff \qquad e \leq \A(a,b) \ . 
\end{equation} 
In particular, the underlying order induced by the $\V$-category structure on $\V$ itself, as in Example~\ref{ex:quantales}.\ref{ex:V as a V-category}, is the original order of the quantale. 
An $\V$-category is said to be {\em skeletal} if its underlying order is anti-symmetric: $\A(a,b)\wedge\A(b,a)\geq e$ implies $a=b$. 
Let $\vcat_s$ denote the full subcategory of skeletal $\V$-categories. 

Notice that we use the same symbol $\leq$ for inequality in both $\V$ and $\A$, and rely on the context to tell them apart.
We shall proceed similarly for joins and meets in the underlying order of an $\V$-category, whenever these exist.


\subsection{A calculus of enriched downsets and upsets.}

An $\V$-functor $\phi:\A\op\to \V$, usually called a (contravariant) presheaf in category theory, can be perceived as an $\V$-valued downset:  
the relation $\A(a,b) \leq [\phi(b), \phi(a)]$, equivalent to $\A(a,b) \tensor \phi(b) \leq \phi(a)$, reads in case $\V=\two$ as 
$(a\leq b$ and $b\in \phi$ implies $a\in \phi)$, that is, $\phi$ is a downset in the usual sense. 
Here we implicitly identified a downset with its associated characteristic function. 
To preserve this intuition, we shall denote by $\D\A$ the $\V$-category of contravariant presheaves $[\A\op, \V]$. Notice that $\D\A$ is skeletal, for its underlying order is pointwise, inherited from $\V$. 
The $\V$-functor known as the {\em Yoneda embedding}, $\y_\A:\A \to \D\A$, $\y_\A(a)=\A(-,a)$, generalises the embedding of an ordered set into the lattice of its downsets.  
The correspondence $\A \mapsto \D\A$ extends to a functor $\vcat \to \vcat$: for each $\V$-functor $f:\A \to \B$, put $\D f (\phi) = 
\bigvee_{a \in \A} \B(-,f(a)) \tensor \phi(a)$.
In fact, $\D$ is a {\em Kock-Z\"{o}berlein monad}~\cite{MarmolejoRosebrughWood02}, with unit $\y_\A:\A \to \D\A$ and multiplication $\mu_\A (\Phi) =
\bigvee_{\phi \in \D\A} \phi \tensor \Phi(\phi)$, 
hence it satisfies $\D\y_\A \dashv \mu_\A\dashv \y_{\D\A}$.

\medskip

Dually, covariant presheaves, that is, $\V$-functors $\psi:\A \to \V$, are the $\V$-enriched analogue of upsets; put $\U\A = [\A, \V]\op$ (the $\mathsf{op}$ corresponding to the containment order for usual upsets), and $\y\prime_\A:\A \to [\A,\V]\op$, $\y\prime_\A(a) = \A(a, -)$ the other Yoneda embedding.

\medskip

Given an $\V$-category $\A$, to each ordinary map $f:A \to \V$ (an $\V$-subset\footnote{
For $\V=[0,1]$, this corresponds to the usual notion of a fuzzy subset~\cite{Zadeh65,Goguen67}.} of $A$), %
one can associate an $\V$-downset $f^\downarrow:\A\op \to \V$ by the formula
\begin{equation}\label{eq:down-closure}
f^\downarrow = \bigvee_{a\in A} f(a) \tensor \A(-,a) \ \ .
\end{equation}
Readers familiar with category theory will recognise in the above formula the left Kan extension of $f$ along the inclusion of the discrete $\V$-category $\mathsf{d}A$ into $\A$; 
in the particular case $\V=\two$, $f^\downarrow$ is the {\em down-closure} of a subset $f$.

\smallskip

\begin{minipage}{.75\textwidth}
\begin{example}
Consider the {\L}ukasiewicz chain $\V=\mathbb 3$, \linebreak together with the singleton subset $\{\frac{1}{2}\}$ represented via its characteristic map as the discrete $\V$-subset $f:\V \to  \V$, $f(\frac{1}{2})=1, f(0)=f(1)=0$. \end{example}
\end{minipage}
\begin{minipage}{.25\textwidth}
\vskip-1.5em
\renewcommand{\arraystretch}{1.3}
{\small{
\begin{equation*}
\begin{array}{|c||c|c|c|}
\hline
a & 0 & \frac{1}{2} & 1\\
\hline 
f^{\downarrow}(a) & 1 & 1 & \frac{1}{2} \\
\hline
\end{array}
\end{equation*}
}}
\end{minipage}
Then its ordinary downclosure is $\{0,\frac{1}{2}\}$, while the $\V$-enriched
 $f^{\downarrow}$ is computed in the next table: in the enriched context, the $\V$-valued downclosure $f^\downarrow$ of the subset $\{a\}$ carries more information (it has a richer structure) than the ordinary one.


\subsection{Limits and colimits in $\V$-categories. (Co)completeness of $\V$-categories.}

In this subsection we shall briefly recall the usual notions of weighted colimits and cocompleteness to be used in the sequel. 

\medskip

A {\em colimit\/} of an $\V$-functor $f:\K\to\A$, 
weighted by an $\V$-functor $\phi:\K\op \to\V$, consists of an object $\colim{\phi}{f}$ of $\A$, such that 
\begin{equation}\label{def:colimit}
\A(\colim{\phi}{f},a) = [\K\op,\V](\phi,\A(f-,a)) = \bigwedge_{k\in \K} [\phi(k),\A(f(k),a)]
\end{equation}
holds, $\V$-natural in $a\in \A$.

An $\V$-category is called {\em cocomplete} if it has all $\V$-enriched colimits; an $\V$-functor is {\em cocontinuous} if preserves all $\V$-enriched colimits. Denote by $\vsup$ the category of cocomplete skeletal $\V$-categories and cocontinuous $\V$-functors. 

It will be convenient for us to use the alternative characterisation of a cocomplete $\V$-category as one having all colimits of the identity functor~\cite{Stubbe05}. That is, for each $\phi:\A\op \to \V$ there is an object $\colim{\phi}{\id_\A}$ in $\A$ such that 
\[
\A(\colim{\phi}{\id_\A},a) = \D\A(\phi,\y_\A(a)) = \bigwedge_{b\in \A} [\phi(b),\A(b,a)] \ \ .
\]
Interpreting the above formula in the ordered case, we see that  $\colim{\phi}{\id_\A} \leq a$ holds if and only if $b \in \phi $ implies $b \leq a$ for all $b$. Hence, intuitively, $\colim{\phi}{\id_\A}$ computes the $\V$-suprema of the $\V$-contravariant presheaf $\phi:\A\op \to \V$ (think again of $\phi$ as an "$\V$-valued downset") and will be denoted $\sup{\A}{\phi}$. Then cocompleteness of $\A$ can be rephrased as the property of the Yoneda embedding of having a left adjoint (namely, $\sup{\A}{}:\D\A\to \A$).

\begin{example}
We list below two types of weighted colimits:

\begin{enumerate}

\item If $\K$ is the unit $\V$-category%
\footnote{
Having only one object, with corresponding $\V$-hom given by the unit of the quantale $e$.} %
$\mathbb 1$, we may identify $f$ with an object $a$ of $\A$ and $\phi$ with an element $v$ of the quantale $\V$. 
The resulting colimit, usually called the {\em tensor} (or copower) of $v$ with $a$, will be denoted $v\ast a$ instead of $\colim{\phi}{f}$. 
Explicitly, the tensor $v\ast a$ is uniquely determined by the relation $\A(v \ast a, b) = [v, \A(a,b)] $ for all $a,b\in \A$.

\item A {\em conical colimit} is a colimit weighted by a presheaf $\phi:\K^{\mathsf{op}} \to \V$ with constant value $e$, the unit of the quantale; its defining property is therefore $
\A(\colim{\phi}{f},a) = \bigwedge_{k\in \K} \A(f(k),a)$ for all $a\in \A$. 
In particular, with respect to the induced order on $\A$, $\colim{\phi}{f}\leq a $ holds if and only if $f(k) \leq a$ holds for every $k$; that is, $\colim{\phi}{f}$ is the {\em join} $\bigvee_{k\in \K} f(k)$ of the family $(f(k))_{k\in \K}$ in the underlying ordered set of the $\V$-category $\A$.
\end{enumerate}

\end{example}

The importance of the previous two types of colimits resides in the characterisation of a cocomplete $\V$-category as an $\V$-category having all tensors and all conical colimits~\cite{kelly:book, Stubbe06}. 
For later use, we just mention how the $\V$-suprema of a contravariant presheaf $\phi:\A\op \to \V$ is obtained via joins and tensors:
\begin{equation}\label{eq:sup-as-join-of-tensor}
\sup{\A}{\phi} = \bigvee_{a\in \A} \phi(a) \ast a \ \ .
\end{equation}

In particular, a skeletal cocomplete $\V$-category is a complete lattice with respect to the induced order.

\begin{remark} \label{rem:-discrete-domain-of-weights}
A careful analysis of the defining property of a weighted colimit~\eqref{def:colimit} shows that without loss of generality, the domain $\V$-category $\K$ of the weight can be chosen discrete.
This is because the quantale in which we enrich is actually a poset. 
The fact that weighted colimits can be discrete means that we can treat them as {\em operations}, and this will be useful in the sequel. 
\end{remark}

Dually, one can define weighted limits (in particular, cotensors $v\rhd a$, meets $\bigwedge_k a_k$ and infima $\inf_\A \psi$ of $\V$-valued upsets $\psi:\A \to \V$) and talk about completeness of $\V$-categories and continuity of $\V$-functors. For more details, we refer to e.g.~\cite{Stubbe05,Stubbe06}, and only recall here the formula allowing to compute the limit of an $\V$-functor $G:\K \to \A$ weighted by $w:\K\to \V$, by meets and cotensors:
\begin{equation}\label{eq:lim-as-meet-of-cotensors}
\mathsf{lim}_{w}G = \bigwedge_{k\in \K} w(k)\rhd G(k) \ .
\end{equation}

\smallskip

To deepen the analogy with order and lattice theory, we add that an $\V$-category $\A$ is complete if and only if it is cocomplete~\cite{Stubbe05}.%
\footnote{
Although the two notions are equivalent, we shall continue to refer to ``cocomplete'' $\V$-categories, as we are mainly interested in suprema of $\V$-downsets and in {\em cocontinuous} $\V$-functors.} %
Alternatively, $\A$ is cocomplete if it is complete as a lattice and has all tensors and cotensors~\cite{Stubbe06}. 
This last result has the advantage of characterising $\vsup$, the category of cocomplete skeletal $\V$-categories and cocontinuous $\V$-functors by operations and equations, as we shall recall next. Formally, the forgetful functor $\vsup \to \set$ is monadic. This goes back to~\cite{JoyalTierney84} and~\cite{PedicchioTholen89}, but see also~\cite{
JanelidzeKelly01,Stubbe07}.


\subsection{\bf Equational presentation of cocomplete $\V$-categories.} A cocomplete \linebreak skeletal $\V$-category can alternatively be described as a complete join-lattice $(A,\supa,\bot)$ endowed with an action of $\V$, $(v,a) \mapsto v \ast a$~\cite{Stubbe06}, satisfying the following equations: 
\begin{equation}\label{eq:vsup}
\begin{aligned}
&
\ e \ast a = a 
&&
v\ast (w \ast a) = (v {\tensor} w) \ast a
\\
&
\left(%
\bigvee_{i\in I} v_i%
\right)%
\ast a = \supa_{i\in I} \left(v_i \ast a \right) \qquad
&&
v \ast \left(%
\supa_{i\in I} a_i%
\right)%
= \supa_{i\in I} \left(v \ast a_i \right) \ .
\end{aligned}
\end{equation}
\medskip

Also, cocontinuous $\V$-functors are precisely the join-lattice homomorphisms preserving the action of $\V$.

\begin{remark}
Readers familiar with ring and module theory will certainly identify cocomplete skeletal $\V$-categories as complete semimodules over $\V$, $\V$ being perceived as a complete idempotent semiring, and cocontinuous $\V$-functors as complete homomorphisms of $\V$-semimodules~\cite{CohenGaubertQuadrat04}. 
\end{remark}

As mentioned above, a (skeletal) $\V$-category $\A$ is complete if and only if it is cocomplete.
In particular, cocompleteness implies existence of arbitrary meets and cotensors. For example, the cotensor $v \rhd - $ is a unary operation on $\A$, right adjoint to $v \ast - $:
\[
v \ast a \leq b \quad \iff \quad a \leq v \rhd b \ .
\]
\vskip.5em

Also, meets and cotensors automatically satisfy equations dual to those in~\eqref{eq:vsup} (formally deduced by adjunction), namely
\begin{equation}\label{eq:vinf}
\begin{aligned}
&
\ e \rhd a = a 
&&
v\rhd (w \rhd a) = (v {\tensor} w) \rhd a
\\
&
\left(%
\bigvee_{i\in I} v_i%
\right)%
\ast a = \infa_{i\in I} \left(v_i \rhd a \right) \qquad
&&
v \rhd \left(%
\infa_{i\in I} a_i%
\right)%
= \infa_{i\in I} \left(v \rhd a_i \right) \ .
\end{aligned}
\end{equation}

The $\V$-category structure can be recovered from the equational presentation above~\cite{PedicchioTholen89} as
\begin{equation}\label{eq:v-hom}
\A(a,b) = \bigvee\{v \mid v \ast a \le b\} = \bigvee\{v \mid a \leq v \rhd b\} \ . 
\end{equation}

\smallskip

For further use, let us also record the relation
\begin{equation}\label{eq:tensor-with-V-hom}
\bigvee_{a\in \A} \A(a,b) \ast a = b
\end{equation}
which generalises the obvious statement that in an ordered set, the join of all elements below some $b$ is $b$ itself.

\begin{remark}\label{rem:eq-pres-vsup}
It will be beneficial in the next section to use an equivalent, but more symmetric presentation of (the objects of) $\vsup$:
the objects are complete lattices (hence having both arbitrary joins $\bigvee$ and meets $\bigwedge$), 
together with a family of adjoint unary operators indexed by $\V$ (tensors $v\ast-$ and cotensors $v\rhd-$), 
satisfying both~\eqref{eq:vsup} and~\eqref{eq:vinf}, in addition to the usual complete lattice equations. 
Notice that arrows in $\vsup$ do not preserve all operations described above, but only joins and tensors\footnote{ 
Consequently, meets and cotensors are only laxly preserved.}.
\end{remark}

\begin{example}

\begin{enumerate}

\item Perhaps the simplest example of a cocomplete $\V$-category is $\V$ itself, with the $\V$-category structure described in Example~\ref{ex:quantales}.\ref{ex:V as a V-category}. Trivially, the unit category $\mathbb 1$ is also cocomplete. 

\item For any $\V$-category $\A$, the associated $\V$-category of $\V$-downsets $\D\A$ is cocomplete and skeletal. In fact, $\D\A$ is the {\em free cocompletion} of $\A$~\cite{kelly:book,Stubbe05}.%
\footnote{
In particular, taking the domain to be discrete, we obtain the cocomplete $[\mathsf{d}A,\V]=\V^A$. The case $A=\emptyset$ produces the cocomplete terminal $\V$-category $\mathbb 1_\top$. } %
For later use, recall how suprema and limits are computed in $\D\A$:
\begin{equation}\label{eq:sup-in-free-cocompletion}
\mathsf{sup}_{\D\A} \Phi = \mu_\A (\Phi) = \bigvee_{\phi\in\D\A} \Phi(\phi) \tensor \phi
\end{equation}
respectively 
\begin{equation}\label{eq:lim-in-free-cocompletion}
\mathsf{lim}_w G = \bigwedge_{k\in \K} [w(k),G(k)(-)]
\end{equation}
where $\Phi:(\D\A)\op \to \V$, $w:\K \to \V$ and $G:\K \to \D\A$ are $\V$-functors.

\item For each $\V$-category $\A$, there is an adjunction commuting with both Yoneda embeddings
\[
\xymatrix@C=40pt@R=15pt{%
&
\A
\ar[dl]_{\y_\A}
\ar[dr]^{{\y\prime}_\A}
&
\\
[\A\op, \V]
\ar@<+1ex>[rr]
\ar@{}[rr]|-\perp 
& 
&
[\A,\V]\op 
\ar@<+1ex>[ll]
}
\]
where the horizontal left adjoint maps an $\V$-downset $\phi$ to 
\[
{\uparrow}\phi = \bigwedge_{a\in \A}[\phi(a),\A(a,-)]
\] 
(the analogue of the (up)set of upper bounds), respectively an $\V$-enriched upset $\psi:\A \to \V$ is mapped by the right adjoint to the $\V$-downset of its lower bounds 
\[
{\downarrow}\psi=
\bigwedge_{a\in \A} [\psi(a), \A(-,a)] \ \ .
\]
%
\noindent The fixed points of these adjunction determine a (complete and) cocomplete skeletal $\V$-category into which $\A$ embeds continuously and cocontinuously, known as the {\em Isbell completion} (the $\V$-categorical analogue of the Dedekind-MacNeille completion of an ordered set)~\cite{Wagner94,Rutten98,Stubbe05}. 
For example, in case $\V=([0,\infty],\geq_{\mathbb R},+,0)$, the Isbell completion is known in the theory of metric spaces as the {\em tight span}~\cite{Willerton13}.

\end{enumerate}

\end{example}


\section{Completely distributive quantale enriched categories}

There are several ways of introducing completely distributive $\V$-categories. Here, we chose to take the perhaps simplest approach, following~\cite{RosebrughWood94}:

\begin{definition}
A {\em completely distributive} $\V$-category%
\footnote{
Also know as a {\em totally continuous} $\V$-category.%
} %
 is a cocomplete $\V$-category $\A$, such that $\sup{\A}{}:\D\A\to \A$ has an $\V$-enriched left adjoint.
\end{definition}

Completely distributive $\V$-categories have been studied in the past by several authors, see e.g.~\cite{LaiZhang06,Stubbe07,BabusKurz16,Stubbe17}. 
As the name suggests, for $\V=\two$ we recover the well-known completely distributive lattices (the left adjoint to $\sup{\A}{}$ mapping an element to the downset of those totally below it). 
We shall denote by $\CCD$ the category of completely distributive skeletal $\V$-categories and continuous and cocontinuous $\V$-functors. 

\begin{remark} 
We shall see in Section~\ref{sec:eq-vccd} how the choice for arrows in $\CCD$ is imposed by the axiomatisation~\eqref{eq:eq-ccd1}: all operations encountered must be preserved. Actually, there is a deeper categorical reason behind that: $\CCD$ is the category of algebras for the lifting of the $\V$-downset monad $\D$ to $\vinf$, the category of algebras for the $\V$-upset monad $\U$~\cite{BabusKurz16,Stubbe17}. 
Consequently, the arrows in $\CCD$ are simultaneously $\U$-homomorphisms and $\D$-homomorphisms, that is, continuous and cocontinuous $\V$-functors. 
\end{remark}

\begin{example}
For any $\V$-category $\A$, $\D\A$ is completely distributive, the left adjoint of $\sup{\D\A}{}$ being $\D\y_\A $~\cite{LaiZhang06,Stubbe07a}.
In particular, for any set $A$, the $\V$-valued powerset $\V^A = [\mathsf{d}A,\V]$ is completely distributive. 
Taking $A$ to be a singleton shows that the quantale $\V$ is itself {\em completely distributive as an $\V$-category}, 
while $A=\emptyset$ produces the completely distributive terminal $\V$-category $\mathbb 1_\top$. 
Also, the unit $\V$-category $\mathbb 1$ is completely distributive.
\end{example}

\begin{remark}
\begin{enumerate}

\item A cocomplete skeletal $\V$-category is completely distributive if and only if it is a projective object in $\vsup$~\cite{Stubbe07}.

\item Unlike the case for lattices, complete distributivity is no longer a self-dual notion in general;\footnote{
Neither is this the case when working in an arbitrary topos~\cite{FawcettWood90}.} %
in fact, in~\cite{LaiZhang06} it is proven that for an integral quantale $\V$, every completely distributive $\V$-category is also  completely co-distributive if and only if $\V$ is a Girard quantale~\cite{Yetter90}. 

\item Complete distributivity of a cocomplete $\V$-category does not necessarily entail the complete distributivity of the underlying lattice. 
For example, $\V$ itself is always completely distributive as an $\V$-category~\cite{Stubbe07a}, but not necessarily distributive as a lattice, as we have seen from Example~\ref{ex:quantales}.
However, there exists a positive result in this sense, due to~\cite{LaiZhang06}: every completely distributive $\V$-category $\A$ is completely distributive as a lattice if and only if $\V$ itself is a completely distributive lattice.

\noindent
Actually, if the reader is interested in cocomplete $\V$-categories which are not completely distributive, there is a simple way of producing such examples: take any complete lattice $A$ which {\em is not completely distributive}, e.g. the diamond lattice $M_{3}$, and a quantale $\V$ which {\em is completely  distributive} as a lattice.
Then the tensor product of $A$ and $\V$ in the category of complete sup-lattices is a cocomplete $\V$-category -- it is the free cocomplete $\V$-category over the complete sup-lattice $A$~\cite{JoyalTierney84,PedicchioTholen89}, but not completely distributive as a lattice~\cite{Shmuely79}. 
Therefore this tensor product is neither $\V$-completely distributive.   
\end{enumerate}

\end{remark}


\subsection{The equational theory of completely distributive $\V$-categories}\label{sec:eq-vccd}

In~\cite{PuZhang15} it is shown that the category $\CCD$ is {\em monadic} over $\set$, hence a(n infinitary) variety~\cite{Linton66}.  
In particular, the free completely distributive $\V$-category over a set $A$ is 
$\D\U (\mathsf{d}A)$~\cite{BabusKurz16,Stubbe17}, similarly to the ordered case~\cite{Markowsky79}.

We shall see in the sequel how $\CCD$ can be described by a nice set of operations and equations, provided certain assumptions on the quantale are fulfilled.
We begin with a simple result, which will turn out to be useful in computations:

\begin{lemma}\label{lemma:rhdstar}
Let $\A$ be a complete $\V$-category. Then 
\[
[v, w]\ast a \ \le\  v\rhd (w\ast a)
\]
holds for any $v,w\in \V$ and $a\in \A$. More generally, we have 
\[
\left( \bigwedge_i [v_i, w_i] \right) \ast a \ \le \ \bigwedge_i \left(\vphantom{\bigwedge_i} v_i \rhd (w_i \ast a) \right)
\]
for all $v,w\in \V$ and all $a\in\A$.
\end{lemma}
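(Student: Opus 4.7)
The plan is to reduce everything to the tensor-cotensor adjunction plus the defining inequality of the internal hom $[-,-]$ in the quantale, i.e.\ the ``modus ponens'' $v \tensor [v,w] \le w$. Since $v \ast (-) \dashv v \rhd (-)$, the target inequality $[v,w]\ast a \le v\rhd(w\ast a)$ is equivalent (by adjunction) to
\[
v \ast \bigl([v,w]\ast a\bigr) \ \le\ w \ast a.
\]

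First I would invoke associativity of the action, i.e.\ the equation $v \ast (w' \ast a) = (v \tensor w') \ast a$ from~\eqref{eq:vsup}, applied with $w' = [v,w]$, to rewrite the left-hand side as $(v \tensor [v,w]) \ast a$. Then by modus ponens in the quantale, $v \tensor [v,w] \le w$, and since $(-)\ast a$ is monotone (being a left adjoint, or directly from the join-preservation equation in~\eqref{eq:vsup}), we obtain $(v\tensor[v,w])\ast a \le w\ast a$, as required. Transposing back across the adjunction yields the first inequality.

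For the generalised inequality, I would handle each index separately. For every $j$, we have $\bigwedge_i [v_i, w_i] \le [v_j, w_j]$, and again since $(-)\ast a$ is monotone, this gives
\[
\left(\bigwedge_i [v_i, w_i]\right) \ast a \ \le\ [v_j, w_j] \ast a \ \le\ v_j \rhd (w_j \ast a),
\]
where the last step is the first part of the lemma. Taking the meet over all $j$ on the right yields the claim.

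I do not anticipate any real obstacle: the argument is a short adjointness manipulation together with monotonicity, and both ingredients are immediate from the equational presentation~\eqref{eq:vsup}--\eqref{eq:vinf} and the definition of $[-,-]$. The only minor subtlety is to remember that $(-)\ast a$ is automatically monotone in its first argument because it preserves all joins, and similarly $v \rhd (-)$ preserves meets, so no additional assumptions on $\V$ or $\A$ beyond completeness are needed.
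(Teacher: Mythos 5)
Your proof is correct and follows exactly the paper's argument: the chain $v\ast([v,w]\ast a)=(v\tensor[v,w])\ast a\le w\ast a$ followed by transposition across the tensor--cotensor adjunction, with the general case obtained by monotonicity of $(-)\ast a$ and taking meets. No issues.
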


\begin{proof}
The following standard sequence of implications below proves the first assertion: 
\def\proofSkipAmount{\vskip -2pt} 
\begin{prooftree}
\AxiomC{$v \otimes [v,w]\leq w$}
\UnaryInfC{$v \ast ([v,w]\ast a) = (v \otimes [v,w])\ast a \leq w\ast a$}
\UnaryInfC{$[v,w]\ast a\leq v \rhd (w\ast a)$}
\end{prooftree}
The second statement is an immediate consequence. 
\end{proof}

\subsection{\bf Assumption.}\phantomsection\label{assumption} 
In what follows, we shall consider $\V$ to be a commutative  quantale, such that 
\begin{enumerate}
\item \label{ass:V-ccd}
$\V$ is {completely distributive} as a lattice.
\item \label{ass:powers-pres-joins} All cotensors $[v,-]:\V \to \V$, for $v\in\V$, preserve {non-empty joins}, that is, 
\[
[v,\bigvee_{i\in I} w_i]=\bigvee_{i\in I} [v,w_i]
\] 
holds in $\V$ for non-empty $I$. 
\end{enumerate}

\begin{remark}
\begin{enumerate}

\item 
Observe that in any quantale, 
$[\bot,v]=\top$ holds for all $v$, hence in particular $[\bot,\bot]=\top$.\footnote{ %
Hence $[\bot, -]$ preserves the empty join only if the quantale is trivial.} %
Also,  
$[v,\bot]=\bot$ 
holds for all 
$v\neq \bot$ in $\V$ 
if and only if the quantale has no zero divisors.\footnote{
That is, $v\otimes w=\bot$ implies $v=\bot$ or $w=\bot$.}
Therefore extending Assumption~\ref{ass:powers-pres-joins} as to include the empty join, that is, the bottom element of the quantale, would not be reasonable.  
%


\item 
The above assumptions are satisfied for all quantales in Example~\ref{ex:quantales}.\ref{ex:2-quantale}-\ref{ex:3-quantale} %
and also by all finite semilinear commutative residuated lattices (in particular by the finite commutative MTL-algebras)~\cite{EstevaGodo01,JipsenTsinakis02}.

\end{enumerate}
\end{remark}

\medskip

Using the symmetric presentation of cocomplete $\V$-categories, as emphasised in Remark~\ref{rem:eq-pres-vsup}, we can now state the main result of this paper:

\begin{theorem}\label{thm:main}

Let $\A=(A, \supa,\infa,(v\ast-)_{v\in \V},(v\rhd-)_{v\in \V})$ be a cocomplete skeletal $\V$-category. 
If $\A$ is completely distributive, then it satisfies 
\begin{equation}\label{eq:V-eq-ccd}
{\bigwedge_{k\in K} \psi(k) \rhd \left( \bigvee_{a\in A} G(k)(a) \ast a \right)
=
\bigvee_{f\in \mathcal F} \bigwedge_{k\in K} \psi(k)\rhd \left( \vphantom{\bigvee_{i}}G(k)(fk)\ast fk \right)}
\end{equation}
for every pair of functions $\psi:K\to \V$, $G:K \to \V^{A}$, where $\mathcal F$ denotes the set of functions $K\to A$.%
\end{theorem}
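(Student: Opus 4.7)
Setting $\tau(k,a) := \psi(k)\rhd(G(k)(a)\ast a)$, I would split \eqref{eq:V-eq-ccd} into two independent rewrites. The first moves each cotensor $\psi(k)\rhd(-)$ inside the inner join,
\[
\bigwedge_{k\in K}\psi(k)\rhd\!\left(\bigvee_{a\in A}G(k)(a)\ast a\right) \;=\; \bigwedge_{k\in K}\bigvee_{a\in A}\tau(k,a),
\]
and the second commutes the outer meet with the inner join via a choice function,
\[
\bigwedge_{k\in K}\bigvee_{a\in A}\tau(k,a) \;=\; \bigvee_{f\in\mathcal F}\bigwedge_{k\in K}\tau(k,fk),
\]
which is exactly the right-hand side of \eqref{eq:V-eq-ccd}.

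\textbf{The two ingredients.} For the first rewrite I need $v\rhd-:\A\to\A$ to preserve non-empty joins, and this is the step where complete distributivity of $\A$ is used decisively. By hypothesis $\sup{\A}{}:\D\A\to\A$ has an enriched left adjoint, and it already has $\y_\A$ as its enriched right adjoint; hence $\sup{\A}{}$ preserves every $\V$-weighted limit and colimit, and in particular preserves both cotensors and joins. In $\D\A$ cotensors and joins are computed pointwise, so assumption~\ref{ass:powers-pres-joins} directly yields $v\rhd\bigvee_i\phi_i=\bigvee_i(v\rhd\phi_i)$ in $\D\A$ whenever $I\neq\emptyset$. Writing $a_i=\sup{\A}{\y_\A(a_i)}$ and pushing this identity through $\sup{\A}{}$ transports it to $\A$; the index set in our application is automatically non-empty, as $\A$ being cocomplete forces $A$ to contain a bottom element. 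For the second rewrite I invoke the Lai--Zhang result cited just before Theorem~\ref{thm:main}: assumption~\ref{ass:V-ccd} together with $\V$-complete-distributivity of $\A$ makes $\A$ completely distributive as an ordinary lattice, so the classical choice-function identity applies to the family $(\tau(k,a))_{k\in K,\,a\in A}$.

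\textbf{Expected obstacle.} The substantive content is in the first step: the bootstrap lifting the hypothesis ``$[v,-]$ preserves non-empty joins in $\V$'' to the corresponding statement inside an arbitrary completely distributive $\V$-category $\A$. Absent complete distributivity there is no enriched left adjoint to $\sup{\A}{}$, and therefore no reason for $\sup{\A}{}$ to preserve cotensors -- exactly the property that allows the pointwise calculation in $\D\A$ to descend to $\A$. Once this lemma is available, the remainder is a clean two-line combination of it with ordinary complete lattice distributivity, and the degenerate case $K=\emptyset$ collapses both sides to $\top$ (empty meet, empty join indexed over the unique empty choice function).
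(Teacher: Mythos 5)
Your proof is correct, but it takes a genuinely different route from the paper's. The paper argues in one pass: it unwinds the defining condition ``$\sup{\A}{}:\D\A\to\A$ preserves weighted limits'' into the equation \eqref{eq:eq-ccd2} and then transforms its left-hand side into the choice-function expression by a chain of (in)equalities that uses Assumption~(2) to distribute $[\psi(k),-]$ over the join defining $G(k)^{\downarrow}(a)$, Assumption~(1) to commute $\bigwedge_k\bigvee_b$ inside the quantale $\V$ itself, Lemma~\ref{lemma:rhdstar} to turn $[v,w]\ast a$ into $v\rhd(w\ast a)$, and finally \eqref{eq:tensor-with-V-hom}; the reverse inequality is the trivial one. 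You instead isolate two structural properties of $\A$ --- complete distributivity of its underlying lattice, imported from Lai--Zhang, and preservation of non-empty joins by each $v\rhd-$, obtained by transporting Assumption~(2) from $\D\A$ (where cotensors and joins are pointwise) through $\sup{\A}{}$, which preserves both limits and colimits because it has adjoints on both sides --- and then recombine them; that recombination is exactly the ``conversely'' computation in the paper's Corollary~\ref{cor:easy-vccd}. Both arguments are sound: your cotensor lemma is a clean, self-contained derivation, your treatment of $K=\emptyset$ and of the non-emptiness of $A$ is right, and the decomposition is arguably more conceptual. What you give up is twofold. First, self-containedness: the lattice distributivity of $\A$ is taken on faith from the literature rather than proved. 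Second, and more structurally, the paper's Remark~\ref{rem:vccd}.\ref{rem:vccd=>ccd} presents the Lai--Zhang result as a \emph{consequence} of Theorem~\ref{thm:main} (by specialising to trivial weights and crisp subsets); if your proof replaced the paper's, that derivation would become circular and the external citation would have to carry the full weight of the theorem's lattice-theoretic content.
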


\begin{proof}
Recall that a cocomplete $\V$-category $\A$ is completely distributive if and only if $\sup{\A}{}:\D\A \to \A$ has an $\V$-enriched left adjoint, equivalently, if $\sup{\A}{}:\D\A \to \A$ preserves weighted limits. 
That is, 
\[
\sup{\A}{\left(\mathsf{lim}_\psi G\right)} = \mathsf{lim}_\psi ( \sup{\A}{}\circ G)
\]
holds for every $\V$-functors $\psi:\K \to \V$ and $G:\K\to \D\A$.
Expressing $\sup{\A}{}$ by tensors and joins, and the weighted limits above by cotensors and meets as in~\eqref{eq:sup-as-join-of-tensor},~\eqref{eq:lim-as-meet-of-cotensors},~\eqref{eq:sup-in-free-cocompletion},~\eqref{eq:lim-in-free-cocompletion}, we obtain 
\begin{equation}\label{eq:vccd-cond}
\bigvee_{a} %
\left( \bigwedge_k [\psi(k),G(k)(a)] \right) \ast a %
= %
\bigwedge_k %
\psi(k) \rhd \left(\vphantom{\bigvee_i} \bigvee_a G(k)(a) \ast a\right)  \ . 
\end{equation}
As explained in Remark~\ref{rem:-discrete-domain-of-weights}, we can {\em replace} $\K$ by a {\em discrete $\V$-category (a set)} without loss of generality.
Hence instead of an $\V$-functor $\psi:\K \to \A$, we shall consider a mere function $\psi:K \to A$.

The natural next step will then be to substitute {\em the $\V$-functor} $G:\K \to \D\A=[\A\op, \V]$ by {\em a function} $G:K\to \V^A = [A , \V]$.
But in this case there is a price to pay: the passage from the family of $\V$-downsets $(G(k):\A\op \to \V)_k$ to a family of $\V$-subsets $(G(k):A \to \V)_k$ forces the appearance of {the $\V$-down-closure} of each $\V$-subset $G(k)\in \V^A$ in~\eqref{eq:vccd-cond}, namely
\begin{equation}\label{eq:eq-ccd1}
\bigvee_a %
\left( \bigwedge_k [\psi(k),{G(k)^{{\downarrow}}(a)}] \right) \ast a %
= %
\bigwedge_k %
\psi(k) \rhd \left( \bigvee_a G(k)^{\downarrow}(a) \ast a \right) 
\end{equation}
But since ``the supremum of an $\V$-subset is the supremum of its $\V$-down-closure'', that is, 
\begin{align*}
&
\bigvee_a G^{\downarrow}(k)(a) \ast a 
&
= 
&&&
\bigvee_a \left(\bigvee_b G(k)(b) \otimes \A(a,b) \right)\ast a
\\
&
&
=
&&&
\bigvee_{b} G(k)(b) \ast \left(\bigvee_a \A(a,b) \ast a \right) 
\\
&
&
=
&&&
\bigvee_{b} {G(k)(b) \ast b \vphantom{\bigvee_a}} \ \mbox{\ \ (by~\eqref{eq:tensor-with-V-hom})}
\end{align*}
we see that the right hand side of Equation~\eqref{eq:eq-ccd1} above remains unchanged by substituting $G(k)^{\downarrow}$ by $G(k)$ and that the complete distributivity equation~\eqref{eq:vccd-cond} becomes 
\begin{equation}\label{eq:eq-ccd2}
\bigvee_a %
\left( \bigwedge_k [\psi(k),{\bigvee_{b} G(k)(b){\tensor} \A(a,b)}] \right) \ast a %
= %
\bigwedge_k %
\psi(k) \rhd \left( \bigvee_a G(k)(a) \ast a \right) \, .
\end{equation}
Notice that the left hand side involves now not only the tensor and the internal hom of the quantale, but also the $\V$-category structure of $\A$, which, as expressed in Equation~\eqref{eq:v-hom}, leads beyond equational logic.
At this stage, the assumptions on $\V$ come into place for the following sequence of computations:
\begin{align*}
&
&&
\bigvee_a \left( \, \bigwedge_{k} \ [\psi(k),\bigvee_b G(k)(b) \otimes \A(a,b)] \right) \ast a
\\
&
=
&&
\bigvee_a \left( \, \bigwedge_{k} \, \bigvee_{b} \ [\psi(k),G(k)(b) \otimes \A(a,b)] \right) \ast a 
&& 
\mbox{(\hyperref[ass:powers-pres-joins]{by Assumption.(2)})}%
\\
&
=
&&
\bigvee_a \left( \bigvee_{f} \bigwedge_{k} \ [\psi(k),G(k)(fk) \otimes \A(a,fk)] \right) \ast a
&& 
\mbox{(\hyperref[ass:V-ccd]{by Assumption.(1)})}
\\
&
\leq
&&
\bigvee_{a,f}  \bigwedge_{k}  \psi(k) \rhd  
\left( 
\vphantom{\bigvee_i} 
\left(
\vphantom{\bigvee_i} 
G(k)(fk) \otimes \A(a,fk)
\right)
\ast a \right)
&&
\mbox{(by Lemma~\ref{lemma:rhdstar})}
\\
&
=
&&
\bigvee_{a,f}  \bigwedge_{k} 
\psi(k)\rhd 
\left(
\vphantom{\bigwedge_{k}} 
G(k)(fk) \ast 
\left(
\vphantom{\bigwedge_{k}} 
\A(a,fk) \ast a
\right) 
\right)  
\\
&
\leq
&&
\bigvee_{f}  \bigwedge_{k} \left(\vphantom{\bigwedge_{k}} \psi(k)\rhd \left(\vphantom{\bigwedge_{k}} G(k)(fk) \ast (\bigvee_{a} \A(a,fk) \ast a) \right)  \right)
\\
&
=
&&
\bigvee_{f}  \bigwedge_{k} \psi(k)\rhd \left(\vphantom{\bigwedge_{k}}  G(k,fk) \ast fk \right)
&&
\mbox{(by \eqref{eq:tensor-with-V-hom})}
\end{align*}
Therefore $\displaystyle{\bigwedge_k %
\psi(k) \rhd \left( \bigvee_a G(k)(a) \ast a \right) \leq \bigvee_{f}  \bigwedge_{k} \psi(k)\rhd \left( \vphantom{\bigwedge_{k}}  G(k)(fk) \ast fk \right)}$ holds. 
The opposite inequality is trivial, as for each $f\in \mathcal F$ we always have
\[
\bigwedge_k \psi(k)\rhd \left(\vphantom{\bigvee_a} G(k)(fk)\ast fk\right)
\leq
\bigwedge_k \psi(k)\rhd \left(\bigvee_{a} G(k)(a)\ast a\right)
\]
hence 
\begin{equation}\label{eq:trivial-ineq}
\bigvee_{f}\bigwedge_k \psi(k)\rhd \left( \vphantom{\bigvee_i} G(k)(fk)\ast fk \right)
\leq
\bigwedge_k \psi(k) \rhd \left( \bigvee_a G(k)(a) \ast a\right)
\end{equation}
and the proof is finished.
\end{proof}


\begin{remark}\label{rem:vccd}
\begin{enumerate}

\item\label{rem:vccd=>ccd} We already know from~\cite{LaiZhang06} that assuming the quantale to be completely distributive entails that each completely distributive $\V$-category is also completely distributive as a lattice. 
To see Theorem~\ref{thm:main} at work, we shall recover the cited result of Lai and Zhang by choosing trivial weights $\psi(k)=e$ and discrete $\V$-subsets $G(k)$ corresponding to a family of ordinary subsets $(A_k)_{k\in K}$ of $A$; that is, 
\[
G(k)(a) = 
\begin{cases} 
e \mbox{ for } a\in A_k 
\\
\bot \mbox{ otherwise } 
\end{cases} 
\forall \, k
\]
Then Equation~\eqref{eq:V-eq-ccd} becomes 
\[
{\bigwedge_{k}  \bigvee A_k
=
\bigvee_{\{f:K\to A \, \mid \, fk\in A_k\}}  \bigwedge_{k} fk} \ .
\]
The reader will recognise the usual law of complete distributivity via choice functions.

\item\label{rem:V-gen} Again by~\cite{LaiZhang06}, each completely distributive $\V$-category is a homomorphic image %
of a subobject of a product of copies of $\V$.\footnote{%
This is the $\V$-enriched generalization of Raney's well-known result that a complete lattice is completely distributive if and only if it is the homomorphic image of a ring of sets~\cite{Raney52}.}
This can be immediately seen as follows: first, any completely distributive $\V$-category $\A$ is a retract of $\D\A$ via $\sup{\A}{}:\D\A \to \A$, which is both continuous and cocontinuous, hence $\A$ is a quotient of $\D\A$ in $\CCD$.
Second, the inclusion functor $\D\A \to [\mathsf{d}A,\V]=\V^A$ is both continuous and cocontinuous (with adjoints provided by Kan extensions), hence $\D\A$ is a subobject of $\V^A$ in $\CCD$. 
That is, $\CCD = \mathsf{HSP}(\V)$. 
In particular, an equation holds in a completely distributive $\V$-category $\A$ if and only if it holds in $\V$. This is perhaps best illustrated by the following Corollary (compare also with~\cite[Corollary~4.14]{LaiZhang20}):

\end{enumerate}

\end{remark}

\begin{corollary}\label{cor:easy-vccd}
Let $\A=(A, \supa,\infa,(v\ast-)_{v\in \V},(v\rhd-)_{v\in \V})$ be a cocomplete skeletal $\V$-category. Then $\A$ satisfies~\eqref{eq:V-eq-ccd} if and only if $\A$ is completely distributive as a lattice and cotensors in $\A$ preserve non-empty joins:
\begin{equation}\label{eq:contensor/joins}
v\rhd  \supa_{i} a_i = \supa_{i} (v \rhd a_i)
\end{equation}
\end{corollary}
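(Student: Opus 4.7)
The plan is to prove both directions separately. The forward implication will specialize the equation~\eqref{eq:V-eq-ccd} to judiciously chosen $K$, $\psi$, $G$ to extract the two conditions; the backward implication is a direct calculation using the two conditions in sequence.

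For the $(\Leftarrow)$ direction, assume $\A$ is completely distributive as a lattice and that cotensors preserve non-empty joins. Start from the LHS of~\eqref{eq:V-eq-ccd} and push $\psi(k)\rhd -$ inside the inner join using~\eqref{eq:contensor/joins} (the carrier $A$ is non-empty because $\A$ is cocomplete and skeletal, so $\bot_\A\in A$):
\[
\bigwedge_{k}\psi(k)\rhd\Bigl(\bigvee_a G(k)(a)\ast a\Bigr) \;=\; \bigwedge_k\bigvee_a \psi(k)\rhd\bigl(G(k)(a)\ast a\bigr).
\]
Then apply classical complete distributivity of the lattice $(A,\supa,\infa)$ to swap the meet past the join along choice functions $f\in\mathcal F$, yielding exactly the RHS of~\eqref{eq:V-eq-ccd}. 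The case $K=\emptyset$ is trivial since both sides reduce to the top element.

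For the $(\Rightarrow)$ direction, recovering complete distributivity of the underlying lattice is exactly Remark~\ref{rem:vccd}.\ref{rem:vccd=>ccd}: set $\psi(k)=e$ and take $G(k)$ to be the characteristic $\V$-subset of a subset $A_k\subseteq A$; then $e\rhd-=\id$, and~\eqref{eq:V-eq-ccd} collapses to the classical complete distributivity law $\bigwedge_k\bigvee A_k=\bigvee_f\bigwedge_k f(k)$. To recover~\eqref{eq:contensor/joins}, fix $v\in\V$ and a non-empty family $(b_j)_{j\in J}$ in $A$, and instantiate~\eqref{eq:V-eq-ccd} with $K=\{*\}$, $\psi(*)=v$, and
\[
G(*)(a)=\begin{cases} e & \text{if } a=b_j \text{ for some } j,\\ \bot & \text{otherwise.}\end{cases}
\]
Since $\mathcal F$ reduces to $A$ and $\bigvee_a G(*)(a)\ast a=\bigvee_j b_j$, the LHS is $v\rhd\bigvee_j b_j$ while the RHS is $\bigvee_{a\in A} v\rhd(G(*)(a)\ast a)$.

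The main subtlety lies in this last step: indices $a\in A$ outside the family contribute $v\rhd\bot$ to the RHS join, which need not equal $\bot$. The point is that monotonicity of $v\rhd-$ gives $v\rhd\bot\le v\rhd b_{j_0}$ for any fixed $j_0\in J$, so these extraneous contributions are absorbed into the join $\bigvee_j v\rhd b_j$ — which is precisely why $J$ must be non-empty, and why the corollary speaks of preservation of \emph{non-empty} joins only.
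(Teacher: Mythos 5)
Your proof is correct and follows essentially the same route as the paper: the same specializations of $K$, $\psi$, $G$ for the forward direction, and the same two-step calculation (push cotensors through non-empty joins, then apply lattice complete distributivity) for the converse. You are in fact slightly more careful than the paper, which silently skips both the absorption of the extraneous $v\rhd\bot_{\A}$ terms into $\supa_{j} (v\rhd b_j)$ and the $K=\emptyset$ case.
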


\begin{proof}
The complete distributivity of $\A$ as a lattice has been discussed above. Take now $K=\{0\}$, $\psi(0)=v$, $G(k)(a_i)=e$ and $G(k)(-)=\bot$ otherwise to obtain~\eqref{eq:contensor/joins}.  
Conversely, using that cotensors preserve non-empty joins, that each complete $\V$-category is non-empty, and finally the complete distributivity of the underlying lattice $(\A, \bigvee, \bigwedge)$, we have that 
\begin{align*}
&
\bigwedge_{k} \psi(k) \rhd \left( \bigvee_{a} G(k)(a) \ast a \right) 
&&
= 
&&
\bigwedge_{k} \bigvee_{a} \left(\vphantom{\bigvee_i} \psi(k) \rhd \left(  G(k)(a) \ast a \right) \right)
\\
&
&&
=
&& 
\bigvee_{f} \bigwedge_{k} \left(\vphantom{\bigvee_i} \psi(k)\rhd(G(k)(fk)\ast fk) \right)
\end{align*}
for each functions $\psi:K\to \V$, $G:K\to \V^A$, hence we recover the complete distributivity law with obtained in Theorem~\ref{thm:main}.
\end{proof}

%

\begin{remark}
Earlier we mentioned that the equational theory of $\CCD$ is generated by $\V$, regardless of~\hyperref[assumption]{Assumption}. Hence there is no hope for obtaining various variants of distributivity laws (like the one discussed below) unless these already hold in $\V$. 

For example, observe that the parentheses in expressions involving tensors and cotensors, e.g. $v\rhd(w \ast a)$, do not change in the above law~\eqref{eq:V-eq-ccd} of complete $\V$-distributivity. 
One may then ask whether the distributivity-like property
\[
v\rhd(w \ast a)= [v,w]\ast a
\] 
could be of interest for $\CCD$. 
But the corresponding relation for $\V$ itself, namely $
[v,w {\tensor} u] = [v,w]{\tensor} u $, implies that the quantale is {\em trivial}. 
To see this, observe that $e\leq [v,v] = [v,e\otimes v] = [v,e] \otimes v \leq e$. Therefore $\V$ is an ordered abelian group~\cite{Lambek99}, hence it cannot be bounded as a lattice unless it is trivial~\cite{Birkhoff79}.

Consequently, completely distributive $\V$-categories are not so distributive after all.  \hspace*{\fill}\qed
\end{remark}

Before ending the paper, we point out one more issue: comparing the constructive versus the non-constructive equations of $\V$-completely distributivity, namely~\eqref{eq:eq-ccd2} and~\eqref{eq:V-eq-ccd}, as recalled below, we see that the left hand sides coincide.
\setlength{\abovedisplayskip}{2pt}
\setlength{\belowdisplayskip}{2pt}
\begin{equation}\label{eq:Vcd-versus-Vccd}
\begin{aligned}
& \bigwedge_{k\in K} %
\psi(k) \rhd \left( \bigvee_{a\in A} G(k)(a) \ast a \right)  
&
=
&&
{\bigvee_{a\in A} %
\left( \bigwedge_{k\in K} [\psi(k),G(k)^{\downarrow}(a)] \right) \ast a }%
\\
&
\bigwedge_{k\in K} \psi(k) \rhd \left( \bigvee_{a\in A} G(k)(a) \ast a \right)
&
=
&&
{\bigvee_{f\in \mathcal F} \bigwedge_{k\in K} \psi(k)\rhd(G(k)(fk)\ast fk)} \ . 
\end{aligned}
\end{equation} 

A careful inspection of the proof of Theorem~\ref{thm:main} reveals that the inequality 
\setlength{\abovedisplayskip}{6pt}
\setlength{\belowdisplayskip}{4pt}
\[
\bigvee_{a\in A} %
\left( \bigwedge_{k\in K} [\psi(k),G(k)^{\downarrow}(a)] \right) \ast a 
\leq
\bigvee_{f\in \mathcal F} \bigwedge_{k\in K} \psi(k)\rhd(G(k)(fk)\ast fk)
\]
holds in $\CCD$ if~\hyperref[assumption]{Assumption} holds. We shall now see an example where the inequality is not true.

\begin{example} 
Take one of the two non-distributive quantales $M_3$ and $N_5$ from Example~\ref{ex:quantales}\ref{exle:Jipsen}. 
Then $\V$-distributivity~\eqref{eq:eq-ccd2} always holds in $\V$, but~\eqref{eq:V-eq-ccd}, which implies lattice-distributivity, does not hold in the non-distributive lattices $M_3$ and $N_5$. 
%
Let us consider for example $\V = M_3$. 
Put $K=\{0,1\}$ and take the constant trivial weight $\psi(0)=\psi(1)=e$. Also, consider the following $K$-indexed family of $\V$-subsets of $\V$ itself, seen as an $\V$-category:
\[
G(k)(x) = 
\begin{cases}
e & k=0, x=e \mbox{ or } k=1, x\in \{a,b\}
\\
\bot & \mbox{otherwise}
\end{cases} \ . 
\]

Equation~\eqref{eq:V-eq-ccd} then becomes $e\wedge(a\vee b) = (e\wedge a) \vee (e\wedge b)$, which interpreted in $\V=M_3$ is false.

During the evaluation the right-hand side of \eqref{eq:V-eq-ccd}, observe that the ordered downsets $G(0)^{\downarrow}$ and $G(1)^{\downarrow}$ are, respectively, $\{\bot,e\}$  and $\{\bot, a,b\}$, hence their intersection is adequately computed by $ (e\wedge a) \vee (e\wedge b) = \bot$. 
But to compute the ``enriched meet'', we need to go back to the formula of the ``weighted downsets''~\eqref{eq:down-closure}. 
{\small{
\begin{align*}
&
\begin{array}{|c||c|c|c|c|c|}
\hline
[-,-] & \bot & a    & e    & b    & \top
\\ \hline\hline
\bot  & \top & \top & \top & \top & \top
\\
a     & b    & \top & b    & b    & \top 
\\
e     & \bot & a    & e    & b    & \top
\\
b     & a    & a    & a    & \top & \top
\\
\top  & \bot & a    & \bot & b    & \top
\\ \hline
\end{array}
&
&\begin{array}{|c||c|c|c|c|c|}
\hline 
x                    & \bot & a & e & b & \top
\\ 
\hline\hline
 G(0)^{\downarrow}(x) & \top & b & e & a & \bot
\\ 
\hline
G(1)^{\downarrow} (x) & \top & \top & \top & \top &  \top\\
\hline
\end{array}
\end{align*}
}}
\hskip-1ex The table above on the left exhibits the internal hom for the quantale structure of $M_{3}$ of Example~\ref{ex:quantales}\ref{exle:Jipsen}.
Therefore $G(0)^{\downarrow}(x) = [x,e]$ and $G(1)^{\downarrow}(x) = [x,a]\vee [x,b]$. 
We see that the enrichment does change $G(1)^{\downarrow}$ significantly. 
One reason is 
-- 
as for example the calculation $G(1)^{\downarrow}(e)=\left(G(1)(a)\tensor [e,a]\right) \vee \left(G(1)(b) \tensor [e,b] \right) =[e,a]\vee[e,b] = a\vee b = \top$ shows
-- 
that due to the tensor products $e\tensor a = a$ and $e\tensor b = b$ the element $e$ can be ``drawn into'' the downset $\{\bot, a,b\}$ even if $e$ is not below $a$ or $b$. 
Another reason, is that while $[\top,a]$ evaluates to zero over $\two$, it evaluates to $[\top,a]=a$ over $M_3$, so that $G(1)^{\downarrow}(\top)=[\top,a]\vee[\top,b]=a\vee b=\top$. 
In fact, the only difference to the right-hand side of \eqref{eq:V-eq-ccd} is the presence of the $\V$-down-closure in the above calculation, but these, read in the appropriate enriched way, do enlarge the downsets just enough to make the equality work again.
\end{example}


\section*{acknowledgements}

The authors wish to thank the organisers of the special session {\sf Mathematical Structures in Formal System Development and Analysis} of the {\em 9th Congress of Romanian Mathematicians (2019)} for inviting them to present their work at the congress.




\begin{thebibliography}{99}


\bibitem{Birkhoff79} G. Birkhoff. Lattice Theory. Vol. 25. American Mathematical Society, Providence, RI, 1979.

\bibitem{BabusKurz16} O. B\u abu\c s and A. Kurz. On the Logic of Generalised Metric Spaces. In: I. Hasuo (Ed.). Coalgebraic Methods in Computer Science CMCS2016. Lect. Notes Comput. Sci. 9608, pp. 136-155. Springer, 2016.

\bibitem{CasleyCrewMeseguerPratt91} R. Casley, R. F. Crew, J. Meseguer, and V. Pratt. Temporal Structures. Math. Structures Comput. Sci. 1.2 (1991), 179-213.

\bibitem{CohenGaubertQuadrat04} G. Cohen, S. Gaubert, and J.-P. Quadrat. Duality and separation theorems in idempotent semimodules. Linear Algebra Appl. 379 (2004), 395-422.

\bibitem{EstevaGodo01} F. Esteva and L. Godo. Monoidal t-norm based logic towards a logic for left-continuous t-norms. Fuzzy Sets and Systems 124.3 (2001), 271-288.

\bibitem{FawcettWood90} B. Fawcett and R. J. Wood. Constructive complete distributivity. I. Math. Proc. Cambr. Phil. Soc. 107.1 (1990), 81-89.

\bibitem{GalatosJipsen} N. Galatos and P. Jipsen. Residuated Lattices of Size up to 6. Available at \url{http://math. chapman.edu/jipsen/finitestructures/rlattices/RLlist3.pdf}.

\bibitem{GalatosGilFerez17} N. Galatos and J. Gil-F\'erez. Modules over quantaloids: applications to the isomorphism problem in algebraic logic and $\pi$-institutions. J. Pure Appl. Algebra 221.1 (2017), 1-24.

\bibitem{GHKLMS03} G. Gierz, K. Hofmann, K. Keimel, J. Lawson, M. Mislove, and D. S. Scott. Continuous lattices and domains. Encycl. Math. Appl. 93. Cambridge University Press, 2003.

\bibitem{Goguen67} J. A. Goguen. L-fuzzy sets. J. Math. Anal. Appl. 18 (1967), 145-174.

\bibitem{HofmannWaszkiewicz11} D. Hofmann and P. Waszkiewicz. Approximation in quantale-enriched categories. Topology Appl. 158.8 (2011), 963-977.

\bibitem{Hohle15} U. H\"ohle. Many-valued preorders I: The basis of many-valued mathematics. In: L. Magdalena, J. L. Verdegay, and F. Esteva (Ed.). Enric Trillas: A Passion for Fuzzy Sets, pp. 125-150. Springer, Cham, 2015.

\bibitem{JanelidzeKelly01} G. Janelidze and G. M. Kelly. A note on actions of a monoidal category. Theory Appl. Categ. 9 (2001), 61-91.

\bibitem{JipsenTsinakis02} P. Jipsen and C. Tsinakis. A Survey of Residuated Lattices. In: J. Mart\'inez (Ed.). Ordered Algebraic Structures, pp. 19-56. Springer, 2002.

\bibitem{JoyalTierney84} A. Joyal and M. Tierney. An extension of the Galois theory of Grothendieck. Mem. Amer. Math. Soc. 51.309 (1984), vii+71.

\bibitem{kelly:book} G. M. Kelly. Basic concepts of enriched category theory. Repr. Theory Appl. Categ. 10 (2005). Originally published as London Math. Soc. Lecture Notes 64, 1982, vi+137.

\bibitem{LaiZhang06} H. Lai and D. Zhang. Many-Valued Complete Distributivity. arXiv0603590 (2006).

\bibitem{LaiZhang20} H. Lai and D. Zhang. Completely distributive enriched categories are not always
continuous. Theory Appl. Categ. 35.3 (2020), 64-88.

\bibitem{Lambek99} J. Lambek. Type Grammar Revisited. In: A. Lecomte, F. Lamarche, and G. Perrier (Ed.). Logical Aspects of Computational Linguistics, pp. 1-27. Springer Berlin Heidelberg, 1999.

\bibitem{Lawvere73} F. W. Lawvere. Metric spaces, generalized logic and closed categories. Repr. Theory Appl. Categ. 1 (2002), 1-37.

\bibitem{Linton66} F. E. J. Linton. Some Aspects of Equational Categories. In: S. Eilenberg, D. K. Harrison, S. MacLane, and H. R\"ohrl (Ed.). Proceedings of the Conference on Categorical Algebra (La Jolla, 1965), pp. 84-94. Springer, Berlin, 1966.

\bibitem{Markowsky79} G. Markowsky. Free completely distributive lattices. Proc. Amer. Math. Soc. 74.2 (1979), 227-228.

\bibitem{MarmolejoRosebrughWood02} F. Marmolejo, R. Rosebrugh, and R. Wood. A basic distributive law. J. Pure Appl. Algebra 168.2-3 (2002), 209-226.

\bibitem{Mulvey86} C. J. Mulvey. $\&$. Suppl. Rend. Circ. Mat. Palermo (2) 12 (1986), 99-104.

\bibitem{OlsonRaftery07} J. S. Olson and J. G. Raftery. Positive Sugihara monoids. Algebra Universalis 57.1
(2007), 75-99.

\bibitem{PedicchioTholen89} M. C. Pedicchio and W. Tholen. Multiplicative structures over sup-lattices. Arch.
Math. 25.1-2 (1989), 107-114.

\bibitem{PedicchioWood99} M. C. Pedicchio and R. Wood. Groupoidal completely distributive lattices. J. Pure
Appl. Algebra 143.1-3 (1999), 339-350.

\bibitem{PuZhang15} Q. Pu and D. Zhang. Categories enriched over a quantaloid: algebras. Theory Appl.
Categ. 30 (2015), 751-774.

\bibitem{Raney52} G. N. Raney. Completely distributive complete lattices. Proc. Amer. Math. Soc. 3.5
(1952), 677-680.

\bibitem{RosebrughWood94} R. Rosebrugh and R. J. Wood. Constructive complete distributivity. IV. Appl. Categ.
Struct. 2.2 (1994), 119-144.

\bibitem{RosebrughWood04} R. Rosebrugh and R. J. Wood. Split structures. Theory Appl. Categ 13.12 (2004), 172-183.

\bibitem{Rutten98} J. J. M. M. Rutten. Weighted colimits and formal balls in generalised metric spaces. Topology Appl. 89.1-2 (1998), 179-202.

\bibitem{Shmuely79} Z. Shmuely. The tensor product of distributive lattices. Algebra Universalis 9.3 (1979), 281-296.

\bibitem{Stubbe05} I. Stubbe. Categorical structures enriched in a quantaloid: categories, distributors and
functors. Theory Appl. Categ. 14.1 (2005), 1-45.

\bibitem{Stubbe06} I. Stubbe. Categorical structures enriched in a quantaloid: tensored and cotensored
categories. Theory Appl. Categ. 16.14 (2006), 283-306.

\bibitem{Stubbe07} I. Stubbe. Q-modules are Q-suplattices. Theory Appl. Categ. 19 (2007), 50-60.

\bibitem{Stubbe07a} I. Stubbe. Towards ``dynamic domains'': totally continuous cocomplete Q-categories. Theoret. Comput. Sci. 373.1-2 (2007), 142-160.

\bibitem{Stubbe17} I. Stubbe. The double power monad is the composite power monad. Fuzzy Sets and Systems 313 (2017), 25-42.

\bibitem{Wagner94} K. Wagner. Solving recursive domain equations with enriched categories. Tech. rep. CMU-CS-94-159 (1994), Ph.D. Thesis, Carnegie Mellon University.

\bibitem{Waszkiewicz09} P. Waszkiewicz. On domain theory over Girard quantales. Fund. Inform. 92.1-2 (2009), 169-192.

\bibitem{Willerton13} S. Willerton. Tight spans, Isbell completions and semi-tropical modules. Theor. Appl. Categ. 28.22 (2013), 696-732.

\bibitem{Yetter90} D. N. Yetter. Quantales and (noncommutative) linear logic. J. Symbolic Logic 55.1 (1990), 41-64.

\bibitem{Zadeh65} L. A. Zadeh. Fuzzy sets. Inf. Control 8 (1965), 338-353.

\end{thebibliography}
\end{document}